\numberwithin{equation}{section}
\newtheorem{theorem}[equation]{Theorem}
\newtheorem{lemma}[equation]{Lemma}
\theoremstyle{definition}
\newtheorem{definition}[equation]{Definition}
\newtheorem{remark}[equation]{Remark}
\def\p{\mathfrak p}
\def\PP{\mathfrak P}
\def\QQ{\mathfrak Q}
\def\q{\mathfrak q}
\def\bP{\,{}^b\!P}
\def\bA{\,{}^b\!A}
\def\C{\mathbb C}
\def\NN{\mathbb N}
\def\R{\mathbb R}
\def\B{\mathcal B}
\def\Crack{\mathcal C}
\def\Dom{\mathcal D}
\def\M{\mathcal M}
\def\N{\mathcal N}
\def\trb{\mathscr T}
\def\P{\mathcal P}
\def\kerb{\mathscr K}
\def\Y{\mathcal Y}
\def\Z{\mathcal Z}
\def\BB{\mathscr B}
\def\scrD{\mathscr D}
\def\Ha{\mathscr H}
\def\scrP{\mathscr P}
\def\Hol{\mathfrak{H}}
\def\Mero{\mathfrak{M}}
\def\pole{\mathfrak{pole}}
\def\ss{\mathfrak s}
\def\eps{\varepsilon}
\def\minus{\backslash}
\def\open#1{\smash[t]{\overset{{}_{\,\,\circ}}{#1}{}}}
\def\set#1{\{#1\}}
\def\im{i}
\def\embed{\hookrightarrow}
\def\display#1#2{\mbox{\parbox{#1} {#2}}}
\DeclareMathOperator{\Diff}{Diff}
\DeclareMathOperator{\End}{End}
\DeclareMathOperator{\spec}{spec}
\DeclareMathOperator{\sing}{sing}
\DeclareMathOperator{\tr}{tr}
\DeclareMathOperator{\dist}{dist}
\begin{document}

\title{The kernel bundle of a holomorphic Fredholm family}
\thanks{Work partially supported by the National Science Foundation, Grants DMS-0901202 and DMS-0901173}

\author{Thomas Krainer}
\address{Penn State Altoona\\ 3000 Ivyside Park \\ Altoona, PA 16601-3760}
\email{krainer@psu.edu}
\author{Gerardo A. Mendoza}
\address{Department of Mathematics\\ Temple University\\ Philadelphia, PA 19122}
\email{gmendoza@temple.edu}

\begin{abstract}
Let $\Y$ be a smooth connected manifold, $\Sigma\subset\C$ an open set and $(\sigma,y)\to\scrP_y(\sigma)$ a family of unbounded Fredholm operators $D\subset H_1\to H_2$ of index $0$ depending smoothly on $(y,\sigma)\in \Y\times \Sigma$ and holomorphically on $\sigma$. We show how to associate to $\scrP$, under mild hypotheses, a smooth vector bundle $\kerb\to\Y$ whose fiber over a given $y\in \Y$ consists of classes, modulo holomorphic elements, of meromorphic elements $\phi$ with $\scrP_y\phi$ holomorphic. As applications we give two examples relevant in the general theory of boundary value problems for elliptic wedge operators.

\end{abstract}

\subjclass[2010]{Primary: 58J32; Secondary: 58J05,35J48,35J58}
\keywords{Manifolds with edge singularities, elliptic operators, boundary value problems}

\maketitle


\section{Introduction}

It has long been recognized that, as with uniformly elliptic linear operators on smooth bounded domains, also for other classes of linear elliptic operators $A$ it is the case that boundary conditions should be expressed as conditions on (some of) the coefficients of the asymptotic expansion at the boundary of formal solutions of $Au=f$. Such asymptotic expansions are proved to exist in many instances, for example for  elliptic $b$-operators or operators of Fuchs type (Kondrat$'$ev \cite{Kond67}, Melrose \cite{Mel93}, Melrose and Mendoza \cite{MM81,MM83}, Rempel and Schulze \cite{RemSch1985}, Schulze \cite{Schulze94}, in an analytic context Igari \cite{Igari}, etc.), where such expansions, well understood, form a finite dimensional space (Lesch \cite{Lesch1997}). More generally, such expansions also exist under some conditions for elliptic $e$-operators (Mazzeo \cite{Maz91}, and in a somewhat different context, Costabel and Dauge \cite{CostabelDauge90,CostabelDauge93} and Schmutzler \cite{Schm92,Schm97}). These classes, which include regular elliptic differential operators, come up in certain geometric problems on noncompact manifolds.  Slightly modified, they are central in the analysis of elliptic problems on compact manifolds with singularities (conical points, edges, and corners, for instance). 

Because of their importance and potential applicability there is great interest in developing the tools to handle boundary value problems for these classes. To this end we present here the definition of a smooth vector bundle whose sections are the principal parts of the analogues of the traces (in the sense of boundary values) in the case of classical problems. Boundary conditions are to be imposed as differential or pseudodifferential conditions on these sections. We refer to the vector bundle as the trace bundle. It depends, in general, on the differential operator itself.  

To give some concrete context, consider first the case where $A$ is a regular elliptic linear differential operator of order $m>0$ on a manifold with boundary. After localization and flattening of the boundary, 
\begin{equation*}
A=\sum_{k+|\alpha|\leq m} a_{k,\alpha}(x,y)D_x^kD_y^\alpha
\end{equation*}
in a neighborhood of $0$ in $\overline \R_+\times\R^n$. The coefficients are smooth, the boundary is $x=0$ and the interior of the manifold contains the region $x>0$. The operator $P=x^mA$ is an example of an edge operator, and $x^{-m}P$ (so $A$ itself) an example of a wedge operator. The operator $P$ is equal to 
\begin{equation*}
\sum_{k+|\alpha|\leq m} a_{k,\alpha}(x,y)x^{m-k-|\alpha|}p_k(xD_x+\im|\alpha|) (xD_y)^\alpha
\end{equation*}
where $p_k(\sigma)=(\sigma+\im(k-1))(\sigma+\im(k-2))\cdots\sigma$. The Ansatz that a formal solution of $Pu=0$ is a formal series in (possibly complex) powers of $x$ at $x=0$ leads to the conclusion that in fact $u$ has a classical Taylor expansion,
\begin{equation*}
u\sim \sum_{k=0}^\infty u_k(y)x^k.
\end{equation*}
The primary reason for this can be seen through the same analysis as with ordinary differential operators with regular singular points: the indicial equation for the operator above is $a_{m,0}(0,y)p_m(\sigma)=0$, an equation whose roots are the elements of 
\begin{equation*}
\mathcal I=\set{0,-\im,-2\im,\dots, -\im (m-1)}.
\end{equation*}
These are the only roots because $a_{m,0}(0,y)$ is invertible due to the assumed ellipticity of $A$. The coefficients of $x^{\im \sigma}$ with $\sigma\in \mathcal I$ in the formal Taylor expansion of a solution of $Au=0$ are the objects on which conditions are placed. The upshot of this brief analysis is that if $A$ acts on sections of a vector bundle $E$, then the trace bundle of $A$ is the direct sum of $m$ copies of the restriction of $E$ to the boundary; this may be viewed simply as the kernel of $x^m D_x^m$. This leads to the trivial complex vector bundle of rank $m$ if $A$ is a scalar operator.

This analysis generalizes, with a different conclusion, to operators of the form $A=x^{-m}P$ where $P$ is an edge operator,
\begin{equation*}
P=\sum_{k+|\alpha|+|\beta|\leq m} a_{k,\alpha,\beta}(x,y,z)(xD_x)^k (xD_y)^\alpha D_z^\beta,
\end{equation*}
(see \cite{Maz91}) where again $a_{k,\alpha,\beta}$ is smooth and, in the simplest case, $z$ ranges in a compact manifold $\Z$ without boundary; $\Z$ is a point in the case of a regular elliptic operator. Loosely speaking, the variable $y$ lies in an open set some Euclidean space, more generally in manifold $\Y$, while $x\geq 0$. The case $\Y=\set{\mathrm{pt.}}$ models conical singularities. Edge-ellipticity of $P$ is the property that its edge symbol,
\begin{equation*}
\sum_{k+|\alpha|+|\beta|= m} a_{k,\alpha,\beta}(x,y,z)\xi^k \eta^\alpha \zeta^\beta,
\end{equation*}
is invertible if $(\xi,\eta,\zeta)\ne 0$. If $P$ is $e$-elliptic, its indicial family,
\begin{equation*}
\P_y(\sigma)=\sum_{k+|\alpha|\leq m} a_{k,0,\beta}(0,y,z)\sigma^k D_z^\beta,
\end{equation*}
is elliptic as a family of differential operators on $\Z$ (typically acting on sections of a vector bundle). This family depends smoothly on $(y,\sigma)$ and holomorphically on $\sigma\in \C$. For any given $y$ the elements $\sigma$ for which $\P_y(\sigma)$ is not invertible forms a closed discrete set $\spec_b(P_y)\subset \C$ called the boundary spectrum of $A$ at $y$ (see \cite{Mel93}); by a well known trick, this set \emph{is} the spectrum of an operator. Quite evidently, $\spec_b(P_y)$ typically depends on $y$. If $\Z$ is a manifold with boundary, then the indicial family is accompanied by homogeneous boundary conditions coming from the original setup---again making the $b$-spectrum discrete for each $y$. The possible dependence on $y$ may lead, because of variable multiplicity, to the effects referred to in the literature as branching asymptotics making the analysis of boundary value problems for $A=x^{-m}P$ considerably more difficult in comparison with the classical case.

It is this branching that we address here. What concerns us is the global definition of the trace bundle, specifically, its $C^\infty$ structure: in order to develop a general theory of boundary value problems for elliptic operators such as $A$, a global problem by its very nature (consider for instance the APS boundary condition), one needs to be able to refer to traces as global objects.

\medskip
The rest of the paper is organized as follows. 

We collect in Section~\ref{sec-SetUp} all the assumptions underlying our construction of a vector bundle and definition of its $C^\infty$ structure, of a  vector bundle closely related to the trace bundle. This vector bundle, which we call the kernel bundle, is defined, as a set, in Section~\ref{se-kerb}; see Theorem~\ref{TheKernelBundle}. Our approach---defining the kernel bundle rather than a trace bundle directly---allows for enough generality to treat at the same time trace bundles both when $\Z$ closed and when $\partial\Z\ne \emptyset$ (the latter case with some boundary condition). Allowing for such generality opens the door to use an iterative approach to handle the more complicated situations that arise in the presence of a stratification of the boundary.

We construct the putative smooth frames in Section~\ref{sec-Frames}. An equivalent version of these local frames was already defined by Schmutzler \cite{Schm92,Schm97} using Keldysh chains, however, these papers do not address the regularity of the transition functions.

Our construction of the special frames lends itself to be readily used to show, in Section~\ref{sec-TransitionFunctions}, that two frames of the same kind are related by smooth transition functions, in other words, that the kernel bundle has a smooth vector bundle structure, see Theorem~\ref{TheFrame}. This structure is natural in that the property that a section is smooth can be checked intrinsically. The proof of Theorem~\ref{TheFrame} relies heavily on the smoothness and nondegeneracy of a pairing between the kernel bundle (associated to a family of operators) and that of its dual. This result is stated as Theorem~\ref{NonDegPairing}; its proof relies heavily on ideas used in \cite{GiMe01}. Section~\ref{sec-TransitionFunctions} is at the heart of this work. 

The last two sections provide examples. In Section~\ref{sec-TraceWedge}, the first of these two sections, we consider the case of a general elliptic wedge operator $x^{-m}P$, $P\in \Diff^m_e$, where the boundary fibration has compact fibers. In the second, Section~\ref{sec-Example}, we illustrate with a toy example (motivated by what would be codimension $1$ cracks in linear elasticity) the use of the kernel bundle when the fibers of the boundary fibration (the $\Z$) are nonclosed.

\section{Set up}\label{sec-SetUp}

Let $\Y$ be a connected manifold and
\begin{equation*}
\wp_1:\Ha_1\to\Y, \qquad \wp_2:\Ha_2\to\Y
\end{equation*}
smooth Hilbert space bundles. Further let $\scrD\to\Y$ be another smooth Hilbert space bundle continuously embedded in $\Ha_1$ with fiberwise dense image and such that the trivializations of $\Ha_1$ (smooth and unitary) restrict to smooth trivializations of $\scrD$. We write $H_1$, $H_2$ and $\Dom$ for the model spaces. 

Let $\Sigma$ an open connected subset of $\C$. With $\pi:\Y\times\Sigma\to \Y$ denoting the canonical projection, let 
\begin{equation}\label{DataInBundleContext}
\scrP:\pi^*\scrD\subset \pi^*\Ha_1\to \pi^*\Ha_2
\end{equation}
be a smooth bundle homomorphism covering the identity consisting of fiberwise closed Fredholm operators depending holomorphically on $\sigma$. Suppose further that for each $y\in \Y$ there is $\sigma\in \Sigma$ such that
\begin{equation}\label{PointwiseOps}
\scrP_y(\sigma):\scrD_y\subset \Ha_{1,y}\to \Ha_{2,y}
\end{equation}
is invertible.
Passing to trivializations over an open set $U\subset \Y$, \eqref{DataInBundleContext} becomes a smooth family
\begin{equation*}
\P_y(\sigma):\Dom\subset H_1\to H_2,\quad (y,\sigma)\in U\times\Sigma,
\end{equation*}
holomorphic in $\sigma$.

Since \eqref{PointwiseOps} is Fredholm for all $\sigma\in \Sigma$ and invertible for some such $\sigma$, 
\begin{equation*}
\sing_b(\scrP_y)=\set{\sigma\in \Sigma:\scrP_y(\sigma)\text{ is not invertible}}
\end{equation*}
is a closed discrete subset of $\Sigma$ and 
\begin{equation*}
\sing_e(\scrP)=\set{(y,\sigma)\in \Y\times\Sigma:\sigma\in \sing_b(\scrP_y)}.
\end{equation*}
is a closed subset of $\Y\times \Sigma$. The notation is motivated by the corresponding objects in the context of $b$- and $e$-operators, $\spec_b$ and $\spec_e$ (see \cite{Mel93}, \cite{Maz91}, also \cite{GKM4}). We will assume the stronger condition that 
\begin{equation}\label{FiniteSpecb}
\display{310pt}{$\sing_b(\scrP_y)$ is a finite set for each $y\in \Y$ and $\sing_e(\scrP)$ is closed in $\Y\times \C$.}
\end{equation}
The condition that $\sing_e(\scrP)$ is closed in $\Y\times \Sigma$ means that for every $y_0\in \Y$ there is $\delta>0$ and a neighborhood $U$ of $y_0$ such that $\dist(\sing_b(\scrP_y),\C\minus\Sigma)>\delta$ if $y\in U$.

Write $\scrP_y(\sigma)^*$ for the Hilbert space adjoint of \eqref{PointwiseOps}. Assume that the domains of the $\scrP_y(\sigma)^*$ join to give another smooth Hilbert space bundle $\scrD^*$ continuously embedded in $\Ha_2$ and such that the smooth unitary trivializations of $\Ha_2$ restrict to trivializations of $\scrD^*$. Defining $\scrP^*_y(\sigma)=\scrP_y(\overline \sigma)^*$ we obtain another smooth homomorphism
\begin{equation*}
\scrP^*:\pi^* \scrD^*\subset \Ha_2\to \pi^*\Ha_1
\end{equation*}
depending holomorphically on $\sigma$, where now $\pi$ is the projection $\Y\times\overline \Sigma\to \Y$, which satisfies the same Fredholm, analyticity, and invertibility properties as $\scrP$.

\section{The kernel bundle}\label{se-kerb}
If $K$ is a Hilbert space and $V\subset \C$ is open, we write $\Mero(V,K)$ for the space of meromorphic $K$-valued functions on $V$ and $\Hol(V,K)$ for the subspace of holomorphic elements. Thus $f\in \Mero(V,K)$ if there is, for each $\sigma_0\in V$, a number $\mu_0\in \NN_0$ such that $\sigma\mapsto (\sigma-\sigma_0)^{\mu_0}f(\sigma)$ is holomorphic near $\sigma_0$. Suppose $\Omega\Subset V$ is open and has smooth (or rectifiable) boundary. If $f\in\Mero(V,K)$ has finitely many poles in $\Omega$ and no poles on $\partial\Omega$, then the sum of the singular parts of $f$ at each pole in $\Omega$ is given by 
\begin{equation*}
\ss_V(f)(\sigma)=\frac{\im}{2\pi}\oint_{\partial\Omega}\frac{f(\zeta)}{\zeta-\sigma}\,d\zeta,\quad \ |\sigma|\gg 1
\end{equation*}
with the positive orientation for $\partial\Omega$. Replacing $\Omega$ by a disjoint union of open discs with small radii, each containing at most a single pole of $f$ and contained in $\Omega$ we see that the formula determines an element of $\Mero(\C,K)$. 

\medskip
Let $V$ be an open subset of $\Sigma$. Since $\sigma\mapsto \scrP_y(\sigma)$ is holomorphic, it gives maps
\begin{equation*}
\scrP_y:\Mero(V,\scrD_y)\to \Mero(V,\Ha_{2,y}),\quad \scrP_y:\Hol(V,\scrD_y)\to \Hol(V,\Ha_{2,y})
\end{equation*}
so there is an induced map
\begin{equation}\label{AsKernel}
[\scrP_y]_V:\Mero(V,\scrD_y)/\Hol(V,\scrD_y)\to \Mero(V,\Ha_{2,y})/\Hol(V,\Ha_{2,y}).
\end{equation}
Any element $[\phi]\in \ker [\scrP_y]_V$ is represented uniquely by the sum of the singular parts of any given representative $\phi$ at the various poles in $V$. Define
\begin{equation*}
\kerb_y=\set{\ss_\Sigma(\phi):\phi\in \Mero(\Sigma,\scrD_y),\ \scrP_y\phi\in \Hol(\Sigma,\Ha_{2,y}) }.
\end{equation*}
Thus $\kerb_y$ is canonically isomorphic to the kernel of $[\scrP_y]_\Sigma$. It is a vector space over $\C$, finite-dimensional due to \eqref{FiniteSpecb} and the various other hypothesis made on $\scrP$. An element of $\kerb_y$ is in particular a $\scrD_y$-valued meromorphic function on $\C$ with poles contained in $\sing_b(\scrP_y)$; and if the element is regular, then it is the zero function. 

It is also convenient to define, if $\sigma_0\in \Sigma$
\begin{equation*}
\kerb_{y,\sigma_0}=\set{\ss_D(\phi):\phi\in \Mero(D,\scrD_y):\scrP_y\phi\in \Hol(D,\Ha_{2,y})}
\end{equation*}
where $D$ is a disc in $\Sigma$ centered at $\sigma_0$ with
\begin{equation*}
\sing_b(\scrP_y)\cap D\minus\set {\sigma_0}=\emptyset.
\end{equation*}
Thus
\begin{equation*}
\kerb_y=\bigoplus_{\sigma_0\in \Sigma}\kerb_{y,\sigma_0}.
\end{equation*}

\begin{theorem}\label{TheKernelBundle}
Define
\begin{equation*}
\kerb=\bigsqcup_{y\in \Y} \kerb_y,\quad \pi:\kerb\to \Y\text{ the canonical map},
\end{equation*}
and let $\BB^\infty(\Y;\kerb)$ be the space of right inverses of $\pi$ which viewed as $\Dom$-valued functions on $(\Y\times \C)\minus\sing_e(\P)$ by way of the trivializations of $\Ha_1$ are smooth in the complement of $\sing_e(\scrP)$ and holomorphic in $\sigma$. If \eqref{FiniteSpecb} holds, then $\pi:\kerb\to \Y$ has a smooth vector bundle structure with respect to which its space of $C^\infty$ sections is $\BB^\infty(\Y;\kerb)$.
\end{theorem}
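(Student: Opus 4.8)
The plan is to build the smooth structure by exhibiting, near each point $y_0\in\Y$, an explicit local frame for $\kerb$ and showing that any two such frames are related by smooth transition functions; the fact that the resulting structure has $\BB^\infty(\Y;\kerb)$ as its sheaf of smooth sections will then follow by checking that the frame sections themselves lie in $\BB^\infty$ and that smoothness of a section is a pointwise-in-$\sigma$, intrinsically verifiable condition. First I would fix $y_0$ and, using condition \eqref{FiniteSpecb}, choose a neighborhood $U\ni y_0$ and a finite union of small disjoint discs $D_1,\dots,D_N\Subset\Sigma$ whose union $\Omega$ contains $\sing_b(\scrP_y)$ for all $y\in U$ while $\partial\Omega$ stays uniformly away from $\sing_e(\scrP)$; this reduces the global problem to the localized pieces $\kerb_{y,\sigma_0}$ and lets me use the contour-integral formula for $\ss_\Sigma$ with a fixed contour $\partial\Omega$ independent of $y\in U$. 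On $\partial\Omega$ the operator $\scrP_y(\sigma)$ is invertible and depends smoothly on $y$ and holomorphically on $\sigma$, so $\scrP_y(\sigma)^{-1}$ is a bounded operator depending smoothly on $(y,\sigma)\in U\times\partial\Omega$; this is the analytic engine that makes everything downstream smooth.

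Next I would construct the frame. The key point is that an element of $\kerb_y$ is the singular part of a meromorphic $\phi$ with $\scrP_y\phi$ holomorphic, and by the usual Gohberg–Sigal/Keldysh theory such $\phi$ are controlled by the (finitely many) Laurent coefficients of $\scrP_y(\sigma)^{-1}$ at the points of $\sing_b(\scrP_y)$, equivalently by a system of root functions / Jordan chains. Rather than track chains (which behave badly under perturbation when multiplicities jump), I would, following the strategy indicated in Section~\ref{sec-Frames}, produce frame sections by applying a fixed finite-rank ``ansatz'' operator and projecting: concretely, pick a $\C$-basis $\psi_1,\dots,\psi_r$ of $\kerb_{y_0}$ and, for $y$ near $y_0$, define candidate sections by solving $\scrP_y\phi_j(y)\equiv 0$ modulo holomorphic with prescribed principal behavior, realized via the Cauchy integral $\phi_j(y)(\sigma)=\frac{\im}{2\pi}\oint_{\partial\Omega}\scrP_y(\zeta)^{-1}g_j(\zeta)/(\zeta-\sigma)\,d\zeta$ for suitable holomorphic $\Ha_2$-valued data $g_j$ chosen so that at $y_0$ one recovers $\psi_j$. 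Smoothness in $y$ and holomorphy in $\sigma$ of these $\phi_j(y)$ are immediate from differentiating under the integral sign using smoothness of $\scrP_y(\zeta)^{-1}$ on the fixed contour; one then checks that for $y$ in a possibly smaller neighborhood these remain linearly independent (an open condition) and hence span $\kerb_y$, whose dimension is locally constant — itself a consequence of the nondegenerate pairing of Theorem~\ref{NonDegPairing}, which forces $\dim\kerb_y=\dim\kerb^*_y$ to be upper semicontinuous from both sides. This gives a local trivialization $U\times\C^r\to\pi^{-1}(U)$.

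Then I would verify the two compatibility facts. First, overlap: given two such frames over $U\cap U'$, the transition matrix is obtained by expanding each $\phi'_k(y)$ in the basis $\{\phi_j(y)\}$; the point is that the coefficients can be extracted by a bilinear pairing against the dual frame of $\kerb^*$ — here is exactly where Theorem~\ref{NonDegPairing} is used, giving a formula for the coefficients as contour integrals of smooth integrands, hence smooth. So the transition functions are in $C^\infty(U\cap U';GL(r,\C))$ and $\kerb$ is a smooth rank-$r$ vector bundle (matching Theorem~\ref{TheFrame}). Second, identification of sections: a section $s$ of $\kerb$ is smooth for this structure iff its frame coefficients are smooth, and one shows this is equivalent to $s$, read as a $\Dom$-valued function on $(\Y\times\C)\minus\sing_e(\scrP)$ via the $\Ha_1$-trivializations, being smooth and holomorphic in $\sigma$, i.e. to $s\in\BB^\infty(\Y;\kerb)$; one direction is clear since the frame sections are in $\BB^\infty$ and $\BB^\infty$ is a $C^\infty(\Y)$-module, and the converse again uses the pairing to recover the coefficients from $s$ by integration.

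I expect the main obstacle to be the construction and linear independence of the local frame when $\sing_b(\scrP_y)$ varies with $y$ — in particular ensuring that the Cauchy-integral ansatz produces exactly $r=\dim\kerb_{y_0}$ independent sections spanning $\kerb_y$ for all nearby $y$, rather than too few (at points where the singularity splits) or spurious ones. Controlling this cleanly is precisely what the nondegenerate smooth pairing of Theorem~\ref{NonDegPairing} is for: it pins down the fiber dimension, certifies that the candidate sections are a basis by a smoothly-varying determinant condition, and supplies the explicit smooth formulas for the transition functions and section coefficients. Everything else — differentiating under the integral, module properties of $\BB^\infty$, the bookkeeping of the finitely many discs $D_i$ — is routine once the contour $\partial\Omega$ has been fixed uniformly over a neighborhood using \eqref{FiniteSpecb}.
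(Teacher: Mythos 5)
Your overall architecture is the same as the paper's — build local frames, invoke a nondegenerate pairing of $\kerb_y$ with $\kerb^*_y$ to get smooth coefficients and smooth transition functions, and then identify the resulting sheaf of smooth sections with $\BB^\infty$ — and your use of a fixed contour $\partial\Omega$ over which $\scrP_y(\sigma)^{-1}$ is smooth is also the mechanism the paper uses for the pairing integrals. But there is a genuine gap in the step where you assert that $\dim\kerb_y$ is locally constant, and that your $r=\dim\kerb_{y_0}$ candidate sections $\phi_j(y)$ span $\kerb_y$ for $y$ near $y_0$. You attribute this to Theorem~\ref{NonDegPairing}, saying it ``forces $\dim\kerb_y=\dim\kerb^*_y$ to be upper semicontinuous from both sides.'' The pairing only gives the pointwise equality $\dim\kerb_y=\dim\kerb^*_y$; it does not supply semicontinuity in either direction. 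Exhibiting $r$ independent sections $\phi_j(y)\in\kerb_y$ and $r$ independent sections $\psi_k(y)\in\kerb^*_y$ whose Gram matrix is invertible near $y_0$ shows only that the pairing is nondegenerate on the $r$-dimensional spans; it does not preclude $\dim\kerb_y=\dim\kerb^*_y$ jumping up (the $\phi_j$ could span a proper subspace on which the pairing is still nondegenerate). Trying to close this with the pairing alone is circular: you would need to know the $\psi_k(y)$ span $\kerb^*_y$, which is equivalent to the dimension statement you are trying to prove.

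What the paper does at this point is essential and is missing from your plan: near each $\sigma_s\in\sing_b(\scrP_{y_0})$ it performs a Schur-complement reduction \eqref{FrakA}--\eqref{FiniteP} to a finite-dimensional holomorphic family $\PP_s(y,\sigma):K_s\to R_s^\perp$, whose invertibility is equivalent to that of $\scrP_y(\sigma)$ on the disc. The determinant $q_s(y,\sigma)=\det\PP_s(y,\sigma)$ is then a scalar holomorphic function to which the argument principle applies, yielding that the number $d_s$ of zeros of $q_s(y,\cdot)$ in $D_{s,\eps}$, counted with multiplicity, is independent of $y\in U$. The paper then shows (Lemma~\ref{PointwiseBasisLemma}) that $\dim\mathfrak K_{s,y}=d_s$ for \emph{every} $y\in U$, not just at $y_0$, by matching the local dimension at each pole $\sigma'$ to the order of vanishing of $q_s(y,\cdot)$ at $\sigma'$. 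This is the dimension count your proposal lacks; Theorem~\ref{NonDegPairing} is used only afterward, to show that the coefficient functions in \eqref{PhiAsLC} are smooth. Your Cauchy-integral ansatz with the full resolvent $\scrP_y(\zeta)^{-1}$ is not wrong for producing elements of $\kerb_y\cap\BB^\infty$, but without the finite-dimensional reduction (or a direct appeal to a Gohberg--Sigal type operator-valued argument principle, which you allude to but then set aside) you cannot conclude that you have produced a full frame, and the remainder of the argument then does not go through.
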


We may replace $\Y$ by an open subset $U\subset \Y$ in all of the above, in which case we naturally write, with only slight abuse of the notation, $\BB^\infty(U;\kerb)$.

\begin{definition}
The vector bundle $\kerb\to\Y$ is the (meromorphic) kernel bundle of $\scrP$.
\end{definition}

Since $\scrP(\sigma)$ commutes with multiplication by functions that depend only on $y$, $\BB^\infty(\Y;\kerb)$ is certainly a module over $C^\infty(\Y)$. 

The proof of Theorem~\ref{TheKernelBundle} will occupy the next two sections. In the first of these, Section~\ref{sec-Frames}, we will find for every $y_0\in \Y$, a neighborhood $U$ of $y_0$ over which $\scrD$, $\Ha_1$, and $\Ha_2$ are trivial and elements 
\begin{equation}\label{LocalFrame}
\phi^s_{j,\ell}\in \BB^\infty(U;\kerb),\quad s=1,\dotsc,S,\ j=1,\dotsc,J_s,\ \ell=0,\dotsc,L_{s,j}-1
\end{equation}
giving a pointwise basis of $\kerb_y$ for each $y\in U$. Then we will show, in Section~\ref{sec-TransitionFunctions}, that for any $\phi\in \BB^\infty(U;\kerb)$, the functions $f^{j,\ell}_s$ such that
\begin{equation*}
\phi=\sum_{s,j,\ell} f^{j,\ell}_s \phi^s_{j,\ell}
\end{equation*}
are smooth. Consequently, declaring the $\phi_{s,j,\ell}$ to be a frame over $U$ gives the desired smooth vector bundle structure. The somewhat peculiar indexing of the components of the frame reflects the nature of the problem, as will be clear as we develop the proof.

\section{Frames}\label{sec-Frames}

Fix $y_0\in \Y$. In the rest of this section we will work in a neighborhood of $y_0$ over which $\scrD$, $\Ha_1$ and $\Ha_2$ are trivial. We write $\P_y(\sigma):\Dom\subset H_1\to H_2$ for the family viewed on the trivialization and write $\sing_b(\P_y)$ in place of $\sing_b(\scrP_y)$ and $\sing_e(\P)$ for the part of $\sing_e(\scrP)$ over the given neighborhood of $y_0$. In this section we aim at finding a neighborhood $U$ of $y_0$ and elements \eqref{LocalFrame} giving a basis of $\kerb_y$ for each $y\in U$.

Denote by $\sigma_s$, $s=1,\dotsc,S_{y_0}$, the points in $\sing_b(\scrP_{y_0})$. Let $K_s\subset \Dom$ and $R_s\subset H_2$ be, respectively, the kernel and image of $\P_{y_0}(\sigma_s)$. The latter space is a closed subspace of $H_2$. The spaces $K_s$ and $R_s^\perp$ are finite-dimensional subspaces of $H_1$ and $H_2$, respectively, of the same dimension because $ \P_{y_0}(\sigma)$ has index $0$. Write $\P_y(\sigma)$ in the form
\begin{equation}\label{FrakA}
\begin{bmatrix}
\p^1_{s,1} & \p^1_{s,2}\\
\p^2_{s,1} & \p^2_{s,2}
\end{bmatrix}:
\begin{matrix}K_s\\ \oplus\\ K_s^\perp\end{matrix}\to \begin{matrix}R_s^\perp\\ \oplus\\ R_s\end{matrix};
\end{equation}
the $\p^i_{s,j}$ are smooth in $(y,\sigma)$ and holomorphic in $\sigma$. The space $K_s^\perp$ is the subspace of $\Dom$ orthogonal to $K_s$ with respect to the inner product of $H_1$. Likewise, the space $R_s^\perp$ is the orthogonal of $R_s$ in $H_2$. All entries except $\p^2_{s,2}$ vanish at $(y_0,\sigma_s)$, and $\p^2_{s,2}(y_0,\sigma_s)$ is invertible. 

There is $\eps>0$ such that
\begin{enumerate}
\item [(1)] \label{Condition1} the family of discs
\begin{equation*}
D_{s,2\eps}=\set{\sigma:|\sigma-\sigma_s| < 2\eps},\quad s=1,\dotsc, S_{y_0}
\end{equation*}
is pairwise disjoint with each $D(\sigma_s,2\eps)$ contained in $\Sigma$; 
\item [(2)]\label{Condition2} for each $s$, $\p^2_{s,2}(y_0,\sigma)$ is invertible if $\sigma\in D_{s,2\eps}$.
\end{enumerate}
By continuity, ($2$) implies there is a connected neighborhood $U$ of $y_0$ such that 
\begin{enumerate}
\item [(3)] \label{Condition3} for each $s$, $\p^2_{s,2}(y,\sigma)$ is invertible if $(y,\sigma)\in U\times D_{s,\eps}$.
\end{enumerate}
Define $\PP_s(y,\sigma) :K_s\to R_s^\perp$ for $(y,\sigma)\in U\times D_{s,\eps}$ by
\begin{equation}\label{FiniteP}
\PP_s=\p^1_{s,1}- \p^1_{s,2}(\p^2_{s,2})^{-1}\p^2_{s,1}.
\end{equation}
In view of ($3$), the invertibility of $\P_y(\sigma)$ if $\sigma\in D_{s,\eps}$ is equivalent to that of $\PP_s(y,\sigma)$. In particular, by ($1$), $\PP_s(y_0,\sigma)$ is invertible if $\sigma\in D_{s,\eps}\minus \set{\sigma_s}$.

The spaces $K_s$ and $R_s^\perp$ have the same dimension so, after fixing a basis for each of these spaces, it makes sense to talk about the determinant of a map $K_s\to R_s^\perp$. Let then $q_s=\det\PP_s$ be computed with respect to some such pair of bases. This is a smooth function on $U\times D_{\sigma_s,\eps}$, holomorphic in $\sigma\in D_{s,\eps}$ and $q_s(y_0,\sigma)$ is nonzero on $D_{s,\eps}\minus \set{\sigma_s}$ by ($1$). Note that
\begin{equation*}
\sing_b(\P_y)\cap D_{s,\eps}=\set{\sigma\in D_{s,\eps}:q_s(y,\sigma)=0}.
\end{equation*}
Shrinking $U$ further we may thus also assume that
\begin{enumerate}
\item [(4)] \label{Condition4} for each $s$, $q_s$ is nowhere zero in $U\times \set{\sigma:\eps/2\leq |\sigma-\sigma_s|<\eps}$, equivalently,
\begin{equation*}
y\in U\implies \sing_b(\P_y)\subset \bigcup_{s=1}^{S_{y_0}} D_{s,\eps/2}.
\end{equation*}
\end{enumerate}
It follows from (4) that the number of zeros, counting multiplicity, of the function
\begin{equation*}
D_{s,\eps}\ni \sigma\mapsto q_s(y,\sigma)\in\C
\end{equation*}
is independent of $y\in U$; we assume throughout that $U$ is connected. Letting $d_s$ be that number, we have in particular that the function $q_s(y_0,\sigma)$ factors as $(\sigma-\sigma_s)^{d_s}h_s(\sigma)$ where $h_s$, defined in $D_{s,\eps}$, is holomorphic and vanishes nowhere in its domain. 

For each $y\in U$ let
\begin{equation*}
\mathfrak K_{s,y} = \set{\ss_{D_{s,\eps}}(\phi):\phi\in \Mero(D_{s,\eps},K_s),\ \PP_s(y,\cdot)\phi\in \Hol(D_{s,\eps},R_s^\perp)}.
\end{equation*}
This space is canonically isomorphic to the kernel of the operator
\begin{equation}
\Mero(D_{s,\eps},K_s)/\Hol(D_{s,\eps},K_s)\to \Mero (D_{s,\eps},R_s^\perp)/\Hol(D_{s,\eps},R_s^\perp)
\end{equation}
induced by $\PP_s(y,\cdot)$. The elements of $\mathfrak K_{s,y}$ are $K_s$-valued meromorphic functions on $\C$ with poles in $D_{s,\eps/2}\cap \sing_b(\P_y)$. Indeed, $\PP_s(y,\sigma)^{-1}$ is meromorphic in $D_{s,\eps}$ with poles in $D_{s,\eps/2}\cap \sing_b(\P_y)$.

By \cite[Lemmas 5.2 and 5.5]{GiMe01} there are elements $\phi^{K_s}_{j,0}(y_0,\sigma)\in \mathfrak K_{s,y_0}$, $j=1,\dotsc,J_s$, with pole only at $\sigma_s$ of some order $L_{s,j}$ such that
\begin{enumerate}
\item [(a)] the elements 
\begin{equation}\label{aBasis}
\phi^{K_s}_{j,\ell}(y_0,\sigma) = \ss_{D_{s,\eps}}\big((\sigma-\sigma_s)^\ell \phi^{K_s}_{j,0}(y_0,\sigma)\big),\quad j=1,\dotsc,J_s,\ \ell=0,\dotsc,L_{s,j}-1
\end{equation}
form a basis of $\mathfrak K_{s,y_0}$,
\item [(b)]the values of the $(\sigma-\sigma_s)^{L_{s,j}}\phi^{K_s}_{j,0}(y_0,\sigma)$ at $\sigma_s$ from a basis of $K_s$,
\item [(c)]the $R_s^\perp$-valued functions
\begin{equation}\label{BasisOfRPerp}
\beta_{s,j}(\sigma) = \PP_s(y_0,\sigma)(\phi^{K_s}_{j,0}(y_0,\sigma))
\end{equation}
are holomorphic in $D_{s,\eps}$ and their values at $\sigma_s$ form a basis of $R_{s}^\perp$.
\end{enumerate}

Define
\begin{equation*}
\phi^{K_s}_{j,\ell}=\ss_{D_{s,\eps}}\big((\sigma-\sigma_s)^\ell \PP_s(y,\sigma)^{-1}\big(\beta_{s,j}(\sigma)\big)\big),\ \ell=0,\dotsc,L_{s,j}-1,\ j=1,\dotsc,J_s.
\end{equation*}

\begin{remark}\label{SmoothnessOfFrame}
Condition (4) above implies that the elements $\phi^{K_s}_{j,\ell}(y,\sigma)$ are holomorphic in the complement of 
\begin{equation*}
(U\times D_{s,\eps/2})\cap \sing_e(A)
\end{equation*}
in $U\times\C$.
\end{remark}

\begin{lemma}\label{PointwiseBasisLemma}
There is a neighborhood $U'\subset U$ of $y_0$ such that for each $y'\in U'$ the restrictions to $\set{y=y'}$ of the functions $\phi^{K_s}_{j,\ell}$ give a basis of $\mathfrak K_{s,y'}$.
\end{lemma}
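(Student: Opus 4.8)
The plan is to establish semicontinuity and dimension-count arguments that force the $\phi^{K_s}_{j,\ell}(y',\cdot)$ to be a basis once $y'$ is close enough to $y_0$. First I would observe that for every $y\in U$, the dimension of $\mathfrak K_{s,y}$ equals $d_s$, the number of zeros of $q_s(y,\cdot)$ in $D_{s,\eps}$ counted with multiplicity. Indeed, $\PP_s(y,\sigma)^{-1}$ is meromorphic on $D_{s,\eps}$ with poles exactly at the zeros of $q_s(y,\cdot)$, and the total order of the pole of $\PP_s(y,\sigma)^{-1}$ (in the sense of the dimension of the space of singular parts achievable as $\ss_{D_{s,\eps}}(\PP_s(y,\cdot)^{-1}\psi)$ with $\psi$ holomorphic) equals the multiplicity of the zero of $\det\PP_s(y,\cdot)=q_s(y,\cdot)$; this is the standard fact relating the order of vanishing of a determinant to the order of the pole of the inverse, and it is precisely what \cite[Lemmas 5.2 and 5.5]{GiMe01} encode. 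Since condition (4) guarantees $d_s$ is independent of $y\in U$, we get $\dim\mathfrak K_{s,y}=d_s=\sum_{j=1}^{J_s}L_{s,j}$ for all $y\in U$; the last equality comes from (a).

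Next I would show that the $\phi^{K_s}_{j,\ell}(y,\cdot)$, $j=1,\dotsc,J_s$, $\ell=0,\dotsc,L_{s,j}-1$, genuinely lie in $\mathfrak K_{s,y}$ for all $y\in U$: by construction $\PP_s(y,\sigma)\big((\sigma-\sigma_s)^\ell\PP_s(y,\sigma)^{-1}\beta_{s,j}(\sigma)\big)=(\sigma-\sigma_s)^\ell\beta_{s,j}(\sigma)$ is holomorphic on $D_{s,\eps}$ because $\beta_{s,j}$ is (part (c)), and multiplying a meromorphic element by $(\sigma-\sigma_s)^\ell$ and taking $\ss_{D_{s,\eps}}$ keeps us in the relevant quotient kernel; so the count of these elements is $\sum_j L_{s,j}=d_s=\dim\mathfrak K_{s,y}$, the right number. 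It therefore suffices to prove linear independence for $y'$ near $y_0$. For this I would argue by contradiction using a compactness/limiting argument: if no such neighborhood existed, there would be a sequence $y_n\to y_0$ and nontrivial linear relations $\sum_{j,\ell}c^{n}_{j,\ell}\phi^{K_s}_{j,\ell}(y_n,\cdot)=0$; normalizing the coefficient vectors to have unit length and passing to a convergent subsequence $c^n\to c\ne 0$, the continuity (indeed holomorphy in $\sigma$, smoothness in $y$, away from $\sing_e$, per Remark~\ref{SmoothnessOfFrame}) of $\phi^{K_s}_{j,\ell}$ and of the singular-part operator on a fixed punctured disc would yield $\sum_{j,\ell}c_{j,\ell}\phi^{K_s}_{j,\ell}(y_0,\cdot)=0$, contradicting (a), which says these are a basis of $\mathfrak K_{s,y_0}$.

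To make the limiting step rigorous I would make the convergence concrete: on the circle $|\sigma-\sigma_s|=\eps$, which is disjoint from $\sing_e(A)$ by (4), the functions $\PP_s(y,\sigma)^{-1}$ depend continuously on $y\in U$ uniformly in $\sigma$ on that circle (the matrix $\PP_s(y,\sigma)$ is continuous and invertible there), hence so do $\beta_{s,j}$ (which are actually independent of $y$) and the contour-integral expressions $\ss_{D_{s,\eps}}\big((\sigma-\sigma_s)^\ell\PP_s(y,\sigma)^{-1}\beta_{s,j}(\sigma)\big)(\sigma)=\frac{\im}{2\pi}\oint_{|\zeta-\sigma_s|=\eps}\frac{(\zeta-\sigma_s)^\ell\PP_s(y,\zeta)^{-1}\beta_{s,j}(\zeta)}{\zeta-\sigma}\,d\zeta$ for $\sigma$ in, say, $\C\minus D_{s,\eps}$. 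So $\phi^{K_s}_{j,\ell}(y,\cdot)\to\phi^{K_s}_{j,\ell}(y_0,\cdot)$ uniformly on compact subsets of $\C\minus D_{s,\eps/2}$ as $y\to y_0$; since these elements are determined by their singular parts, equivalently by their principal-part coefficients, pointwise values on such a set (or the residues obtained by integrating against $(\sigma-\sigma_s)^k$) detect a nontrivial limiting relation, completing the contradiction. The main obstacle, and the step deserving the most care, is this last convergence/detection argument — specifically making sure that a limit of linear relations among the $\phi^{K_s}_{j,\ell}(y_n,\cdot)$ (whose poles may split off $\sigma_s$ and move as $y_n$ varies) does produce a bona fide relation among the $\phi^{K_s}_{j,\ell}(y_0,\cdot)$; one must integrate/evaluate on the fixed circle $|\sigma-\sigma_s|=\eps$ where all the poles are corralled inside by (4), so that the relevant contour integrals pass to the limit cleanly. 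Everything else is bookkeeping with the determinant–order identity and the fixed dimension count.
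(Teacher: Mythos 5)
Your proposal follows essentially the same route as the paper's proof: first show $\dim\mathfrak K_{s,y}=d_s$ for all $y\in U$ by relating the dimension to the order of vanishing of $q_s(y,\cdot)$ (the paper derives this explicitly from the diagonalization provided by the special bases of \cite[Lemmas 5.2, 5.5]{GiMe01} rather than citing it as a general determinant-order fact, but the content is the same), and then prove linear independence near $y_0$ by the identical compactness/normalization argument, passing to a limit on the fixed annulus $\eps/2<|\sigma-\sigma_s|<\eps$ where the frame elements are jointly continuous and the poles cannot escape. The only difference is stylistic — you spell out the contour-integral mechanism for the limiting step a bit more explicitly — so the argument is correct and matches the paper.
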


\begin{proof}
We first argue that $\dim \mathfrak K_{s,y}=d_s$ for all $y\in U$. By \cite[Lemma 5.5]{GiMe01} the dimension of $\mathfrak K_{s,y_0}$ is $d_s=\sum_j L_{s,j}$. This is also the number of zeros of $q_s(y_0,\cdot)$ in $D_{s,\eps}$ counting multiplicity (i.e., the order of vanishing of $q_s(y_0,\cdot)$ at $\sigma_s$, its single zero). To see this, note that the elements $(\sigma-\sigma_s)^{L_j} \phi^{K_s}_{j,0}(y_0,\sigma)$ are holomorphic at $\sigma=\sigma_s$ and that their values there, therefore also nearby, form a basis of $K_s$. We noted that the $\beta_{s,j}(\sigma)$ in \eqref{BasisOfRPerp} give a basis of $R_s^\perp$ when $\sigma=\sigma_s$, hence also for $\sigma$ near $\sigma_s$. With respect to these bases, the matrix of $\PP_s(y_0,\sigma)$ is diagonal with entries $(\sigma-\sigma_s)^{L^j}$, therefore $\det\PP_s(y_0,\sigma)=\prod_j(\sigma-\sigma_s)^{L_j}$ modulo a nonvanishing factor. If $y\in U$ is arbitrary and $\sigma'\in D_{s,\eps}\cap \sing_b(\P_y)$, then the same argument gives that
\begin{equation*}
\dim \set{\phi\in \mathfrak K_{s,y}: \pole(\phi)=\set{\sigma'}}
\end{equation*}
is equal to the order of vanishing of $q_s(y,\cdot)$ at $\sigma'$. Since $\mathfrak K_{s,y}$ is the direct sum of these spaces, the dimension of $\mathfrak K_{s,y}$ is the number of zeros, $d_s$ again, of $q_s(y,\cdot)$ counting multiplicity. Thus $\dim \mathfrak K_{s,y}=d_s$.

We now show that there is a neighborhood $U'$ of $y_0$ in $U$ such that the $d_s$ functions $\phi^{K_s}_{j,\ell}(y,\cdot)$ are linearly independent for every $y\in U'$. If not, there is a sequence $\set{y_k}_{k=1}^\infty\subset U$ converging to $y_0$ and for each $k$, numbers $a^{j,\ell}_k$ not all zero, such that 
\begin{equation*}
\sum_{j,\ell} a^{j,\ell}_k \phi^{K_s}_{j,\ell}(y_k,\cdot)=0.
\end{equation*}
We may assume $\sum_{j,k}|a^{j,\ell}_k|^2=1$ and then, passing to a subsequence, that the sequence $\set{a^{j,\ell}_k}_{k=1}^\infty$ converges, say $\lim_{k\to\infty}a^{j,\ell}_k=a^{j,\ell}$. The functions $\phi^{K_s}_{j,\ell}$ are in particular defined and continuous when $\eps/2<|\sigma-\sigma_s|<\eps$, so 
\begin{equation*}
0=\lim_{k\to \infty}\sum_{j,\ell} a^{j,\ell}_k \phi^{K_s}_{j,\ell}(y_k,\cdot)=\sum_{j,\ell} a^{j,\ell} \phi^{K_s}_{j,\ell}(y_0,\cdot)
\end{equation*}
if $|\sigma-\sigma_s|>\eps/2$ (see Remark \ref{SmoothnessOfFrame}). Since the $\phi^{K_s}_{j,\ell}(y_0,\sigma)$ are meromorphic in $\C$, the equality holds everywhere. Since not all $a_{j,\ell}$ are zero, we reach the conclusion that the elements \eqref{aBasis} are linearly dependent, a contradiction. Thus it must be that the $\phi^{K_s}_{j,\ell}(y,\cdot)$ are linearly independent for $y$ near $y_0$. This completes the proof of the lemma.
\end{proof}

Replacing $U$ by $U'$ allows us to assume:
\begin{enumerate}
\item [(5)] \label{IndependenceOnU} the elements $\phi^{K_s}_{j,\ell}$ form a basis of $\mathfrak K_{s,y}$ for each $y\in U$ and $s=1,\dotsc,S$.
\end{enumerate}
\begin{equation}
\display{310pt}{The elements $\phi^{K_s}_{j,\ell}$ form a basis of $\mathfrak K_{s,y}$ for each $y\in U$ and $s=1,\dotsc,S$.}
\end{equation}

\medskip
Suppose now that $\phi\in \BB^\infty(U;\kerb)$. Passing to trivializations, there is a smooth function 
\begin{equation*}
u:(U\times\Sigma)\minus \sing_e(\P)\to \Dom
\end{equation*}
such that $v_y= \P_y u_y$ is holomorphic in $\Sigma$ for each $y\in U$ and
\begin{equation*}
\phi=\ss_\Sigma(u)
\end{equation*}
The function $u_y$ is meromorphic in $\Sigma$ with poles in $\sing_b(\P_y)$. We will omit $y$ and $\sigma$ from the notation in the following few lines. Decomposing $u$ pointwise over $U\times D_{s,\eps}$ as
\begin{equation*}
u = u^{K_s}+u^{K_s^\perp}
\end{equation*}
according to $K_s\oplus K_s^\perp = \Dom$, similarly
\begin{equation*}
v = v^{R_s^\perp}+v^{R_s},
\end{equation*}
we obtain using \eqref{FrakA} that
\begin{equation*}
u^{K_s} = \PP_s^{-1}(v^{R_s^\perp}-\p^1_{s,2}(\p^2_{s,2})^{-1}v^{R_s}),\qquad u^{K_s^\perp}=(\p^2_{s,2})^{-1}(v^{R_s^\perp} - \p^2_{s,1}u^{K_s}).
\end{equation*}
Since $(\p^2_{s,2})^{-1}(v^{R_s^\perp})$ is smooth on $U\times D_{s,\eps}$ and holomorphic in the second variable, 
\begin{equation*}
u \equiv u^{K_s}-(\p^2_{s,2})^{-1}\p^2_{s,1}u^{K_s}
\end{equation*}
modulo a smooth function on $U\times D_{s,\eps}$ depending holomorphically on $\sigma$. Since $\ss_{D_{s,\eps}}u^{K_s}$ is an element of $\mathfrak K_{s,y}$ for each $y\in U$, Lemma~\ref{PointwiseBasisLemma} (i.e. Condition (5)) gives, for each such $y$, unique numbers $f^{j,\ell}_s(y)$ such that
\begin{equation*}
\ss_{D_{s,\eps}}(u^{K_s})=\sum_{j,\ell} f^{j,\ell}_s\phi^{K_s}_{j,\ell},
\end{equation*}
that is,
\begin{equation*}
u^{K_s}-\sum_{j,\ell} f^{j,\ell}_s\phi^{K_s}_{j,\ell}
\end{equation*}
is holomorphic in $\sigma$ for each $y\in U$ when $\sigma\in D_{s,\eps}$. So the same is true of
\begin{equation*}
u-\big(\sum_{j,\ell} f^{j,\ell}_s\phi^{K_s}_{j,\ell}-(\p^2_{s,2})^{-1}\p^2_{s,1}\sum_{j,\ell} f^{j,\ell}_s\phi^{K_s}_{j,\ell}\big),
\end{equation*}
hence passing to singular parts and adding we also have, with 
\begin{equation}\label{PointwiseBasis}
\phi =\sum_{s,j,\ell} f^{j,\ell}_s \big(\phi^{K_s}_{j,\ell}-\ss_{D_{s,\eps}}(\p^2_{s,2})^{-1}\p^2_{s,1} \phi^{K_s}_{j,\ell}\big),
\end{equation}
that $u_y-\phi_y$ is holomorphic in $\bigcup_{s} D_{s,\eps}$ for each $y\in U$ and $s=1,\dotsc,S$. Using that, as a consequence of \eqref{FiniteSpecb}, $\spec_b(\P_y)\subset \bigcup_s D_{s,\eps}$, we conclude that $u_y-\phi_y$ is holomorphic in $\Sigma$. It follows that the elements 
\begin{equation}\label{phihat}
\phi^s_{j,\ell}=\phi^{K_s}_{j,\ell}-\ss_{D_{s,\eps}}\big((\p^2_{s,2})^{-1}\p^2_{s,1}\phi^{K_s}_{j,\ell}\big)
\end{equation}
with $s=1,\dotsc,S$, $j=1,\dotsc,J_s$, $\ell=0,\dotsc,L_{s,j}-1$,  form a basis of $\kerb_y$ for each $y\in U$. By means of the trivialization of $\Ha_1$ over $U$ we view $\phi^s_{j,\ell}$ as an element of $\BB^\infty(U;\kerb)$ with singularities within $U\times D_{s,\eps/2}$. 

\section{Smoothness of transition functions}\label{sec-TransitionFunctions}

The proof of the following theorem will complete our proof of Theorem~\ref{TheKernelBundle}.

\begin{theorem}\label{TheFrame}
The $\phi^s_{j,\ell}$ in \eqref{phihat} are elements of $\BB^\infty(U;\kerb)$ giving a basis of $\kerb_y$ for each $y\in U$. If $\phi\in \BB^\infty(U;\kerb)$ then 
\begin{equation}\label{PhiAsLC}
\phi=\sum_{s,j,\ell} f^{j,\ell}_s\phi^s_{j,\ell}\quad \text{for }y\in U
\end{equation}
with smooth functions $f^{j,\ell}_s:U\to\C$.
\end{theorem}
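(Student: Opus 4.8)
The plan is to reduce everything to the already-established facts about the finite-dimensional families $\PP_s$ and then to bootstrap the smoothness of the coefficient functions $f^{j,\ell}_s$ from the smoothness of $\phi$ by means of a nondegenerate pairing. First I would recall that from Section~\ref{sec-Frames} (in particular Condition~(5) and the displayed computation culminating in \eqref{PointwiseBasis}) the $\phi^s_{j,\ell}$ of \eqref{phihat} lie in $\BB^\infty(U;\kerb)$ — they are holomorphic off $(U\times D_{s,\eps/2})\cap\sing_e(\P)$ by Remark~\ref{SmoothnessOfFrame} and the smoothness of $(\p^2_{s,2})^{-1}\p^2_{s,1}$ — and that for each fixed $y\in U$ they form a basis of $\kerb_y$: indeed $\kerb_y=\bigoplus_s \mathfrak K_{s,y}$ up to the smooth, holomorphically-parametrized correction $-\ss_{D_{s,\eps}}((\p^2_{s,2})^{-1}\p^2_{s,1}\,\cdot)$, and the $\phi^{K_s}_{j,\ell}$ are a basis of $\mathfrak K_{s,y}$ by Condition~(5). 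So the only thing left to prove is that the \emph{uniquely determined} coefficient functions $f^{j,\ell}_s\colon U\to\C$ in \eqref{PhiAsLC} are smooth.

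The key tool is Theorem~\ref{NonDegPairing}: there is a smooth, fiberwise nondegenerate pairing between $\kerb$ (for $\scrP$) and the kernel bundle $\kerb^*$ of the dual family $\scrP^*$. Concretely, for $\phi\in\kerb_y$ and $\psi\in\kerb^*_y$ the pairing is given by a contour integral of the form $\frac{1}{2\pi\im}\oint \langle \phi(\sigma),\psi(\overline\sigma)\rangle\,d\sigma$ over a circle enclosing $\sing_b(\scrP_y)$ (this is the construction from \cite{GiMe01}); smoothness in $y$ is exactly the content of Theorem~\ref{NonDegPairing}. Granting this, I would proceed as follows. Fix $y_0\in U$. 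Shrinking $U$ if necessary, construct — by the same argument applied to $\scrP^*$ — a frame $\psi^{s'}_{j',\ell'}\in\BB^\infty(U;\kerb^*)$ of the dual kernel bundle. Form the matrix of functions $G^{s,j,\ell}_{s',j',\ell'}(y)=\langle \phi^s_{j,\ell},\psi^{s'}_{j',\ell'}\rangle_y$; this is smooth on $U$ by Theorem~\ref{NonDegPairing} together with the already-proved smoothness of the two frames, and it is invertible at $y_0$ (hence, after shrinking $U$, on all of $U$) because the pairing is fiberwise nondegenerate and the $\phi^s_{j,\ell}$, $\psi^{s'}_{j',\ell'}$ are fiberwise bases. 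Pairing \eqref{PhiAsLC} against each $\psi^{s'}_{j',\ell'}$ gives the linear system $\langle\phi,\psi^{s'}_{j',\ell'}\rangle_y=\sum_{s,j,\ell} f^{j,\ell}_s(y)\,G^{s,j,\ell}_{s',j',\ell'}(y)$; the left-hand side is smooth in $y$ again by Theorem~\ref{NonDegPairing} (using that $\phi\in\BB^\infty(U;\kerb)$), so inverting the smooth invertible matrix $G$ expresses each $f^{j,\ell}_s$ as a smooth function of $y$. This proves smoothness near the arbitrary point $y_0$, hence on all of $U$.

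The main obstacle is, as flagged in the introduction, the smoothness assertion of Theorem~\ref{NonDegPairing} — that the contour-integral pairing of a section of $\kerb$ with a section of $\kerb^*$ depends smoothly on $y$. The subtlety is that the poles of the meromorphic representatives move with $y$ and may collide or split (the "branching" phenomenon), so one cannot naively differentiate under a fixed contour without first arranging the contour to enclose $\sing_b(\scrP_y)$ uniformly for $y$ in a neighborhood; Condition~(4) is precisely what licenses choosing such a $y$-independent contour $\partial D_{s,\eps}$ around each cluster, after which the integrand is genuinely smooth and holomorphic on a neighborhood of the contour and differentiation under the integral sign is justified. I would treat Theorem~\ref{NonDegPairing} as given here (it is stated in the excerpt and is proved separately using the methods of \cite{GiMe01}), so within the present proof the only genuine work is the linear-algebra bookkeeping above: checking that $G(y_0)$ is invertible and that shrinking $U$ preserves invertibility, which is routine. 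A secondary point worth stating carefully is that the dual frame must be constructed over the \emph{same} neighborhood $U$ (or a common refinement), which is harmless since the construction in Section~\ref{sec-Frames} applies verbatim to $\scrP^*$ and one may intersect the two resulting neighborhoods of $y_0$.
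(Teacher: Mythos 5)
Your proposal matches the paper's proof: after recalling from Section~\ref{sec-Frames} that the $\phi^s_{j,\ell}$ lie in $\BB^\infty(U;\kerb)$ and are pointwise bases with uniquely determined coefficients $f^{j,\ell}_s$, the paper likewise builds a dual frame $\psi^{j',\ell'}_{s'}\in\BB^\infty(U;\kerb^*)$ for $\scrP^*$, forms the Gram matrix $a^{s,j',\ell'}_{s',j,\ell}=[\phi^s_{j,\ell},\psi^{j',\ell'}_{s'}]^\flat$ (nonsingular by Theorem~\ref{NonDegPairing}), pairs \eqref{PhiAsLC} against the dual frame, and solves the smooth linear system. One small correction: smoothness in $y$ of the pairing is not the content of Theorem~\ref{NonDegPairing} (which asserts only fiberwise nondegeneracy) --- it follows directly from $\phi,\psi\in\BB^\infty$ and the $y$-independent contours $\partial D_{s,\eps}$, as you in fact argue; and since nondegeneracy holds fiberwise and the two frames are bases at \emph{every} $y\in U$, the Gram matrix is invertible on all of $U$ with no further shrinking needed.
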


\begin{proof}
We have already seen that, as a consequence of Condition (5) in the previous section and \eqref{PointwiseBasis}, if $\phi\in \BB^\infty(U;\kerb)$ then there are unique functions $f^{j,\ell}_s:U\to \C$ such that \eqref{PhiAsLC} holds. The task now is to show that the $f^{j,\ell}_s$ are smooth. 

Define $\kerb^*$ to be the meromorphic kernel bundle of $\scrP^*$; this will not necessarily be the dual bundle of $\kerb$ but the notation is convenient. The following theorem is the key result.

\begin{theorem}\label{NonDegPairing}
Let $\Omega\Subset \Sigma$ be open with smooth positively oriented boundary and $\sing_b(\scrP_y)\subset \Omega$. 
 The sesquilinear pairing
\begin{equation}\label{GreenMellinForm}
\begin{gathered}
\kerb_y\times\kerb^*_y\to \C,\\
[\phi,\psi]_y^\flat=\frac{1}{2\pi}\oint_{\partial\Omega} \big(\phi(\sigma),\scrP_y^\star (\overline \sigma) \psi(\overline \sigma)\big)_{\Ha_{1,y}}d\sigma
\end{gathered}
\end{equation}
is nondegenerate.
\end{theorem}

The theorem is proved below. Assuming its validity, we proceed as follows. Reverting to trivializations, write $\P^*(\sigma)$ (with $\sigma\in \overline\Sigma$) for the local version of $\scrP^*(\sigma)$ over $U$. Possibly after shrinking $U$ about $y_0$ we can carry out the constructions we did for $\P(\sigma)$ in obtaining the elements $\phi^s_{j,\ell}$ to obtain elements 
\begin{equation*}
\psi^{j,\ell}_s\in\BB^\infty(U;\kerb^*)
\end{equation*}
giving a pointwise basis of $\kerb^*$ over $U$. In doing this we note that $\sing_b(\scrP^*_y)$ is the conjugate set, $\overline{\sing_b(\scrP_y)}$, of $\sing_b(\scrP_y)$ and take advantage notationally of the fact that by \cite[Lemma 6.2]{GiMe01} we can use the same indices $s,j,\ell$ as for the $\phi^s_{j,\ell}$. We also arrange that the singularities of $\psi^{j,\ell}_s$ lie within $U\times \overline D_{s,\eps/2}$, the ``conjugate'' of $U\times D_{s,\eps/2}$.

If $\phi\in \B^\infty(U,\kerb)$ and $\psi\in \B^\infty(U,\kerb^*)$, then
\begin{equation*}
U\ni y\mapsto [\phi(y),\psi(y)]_y^\flat=\frac{1}{2\pi}\sum_s\oint_{\partial D_{s,\eps}} \big(\phi(y,\sigma),\P_y^\star (\overline \sigma) \psi(y,\overline \sigma)\big)_{H_1}d\sigma\in \C
\end{equation*}
is smooth, simply because the integrand is smooth in the complement in $U\times\Sigma$ of $\sing_e(\scrP)\cap (U\times \Sigma)$. Consequently, the functions defined by 
\begin{equation*}
a^{s,j',\ell'}_{s',j,\ell}(y)=[\phi^s_{j,\ell}(y),\psi^{j',\ell'}_{s'}(y)]_y^\flat
\end{equation*}
are smooth, as are the functions
\begin{equation*}
b^{j',\ell'}_{s'}(y)=[\phi(y),\psi^{j',\ell'}_{s'}(y)]_y^\flat. 
\end{equation*}
Since the $\phi^s_{j,\ell}(y)$ and $\psi^{j',\ell'}_{s'}(y)$ form bases of, respectively $\kerb_y$ and $\kerb^*_y$, the nondegeneracy of \eqref{GreenMellinForm} implies that the matrix $a=[a^{s,j',\ell'}_{s',j,\ell}]$ is nonsingular. If $\phi\in \BB^\infty(U;\kerb)$ then \eqref{PhiAsLC} gives
\begin{equation*}
b^{j',\ell'}_{s'}=[\phi,\psi^{j',\ell'}_{s'}]^\flat=\sum_{s,j,\ell}f^{j,\ell}_s[\phi^s_{j,\ell},\psi^{j',\ell'}_{s'}]^\flat = \sum_{s,j,\ell}f^{j,\ell}_sa^{s,j',\ell'}_{s',j,\ell}.
\end{equation*}
Since $a$ is nonsingular and its entries and those of $b=[b^{j',\ell'}_{s'}]$ are smooth, so are the $f^{j,\ell}_s$. This completes the proof of Theorem~\ref{TheFrame}.
\end{proof}

\begin{proof}[Proof of Theorem \ref{NonDegPairing}]
The assertion is a pointwise statement, so we may assume $y=y_0$ throughout the proof to take advantage of the notation introduced so far; we also work with the trivializations near $y_0$ of the various Hilbert space bundles. The proof we give here collects arguments spread throughout Sections 5, 6, and 7 of \cite{GiMe01}, used there to prove an analogous statement. Our notation here is slightly different from the one used there.

If $\phi\in \kerb_{y_0}$, then $\phi=\sum_{s}\phi^s$ with $\phi^s=\ss_{D_{s,\eps}}\phi$. The element $\phi^s$ has a pole only at $\sigma_s$ and is of the form
\begin{equation*}
\phi^s=\phi^{K_s}-\ss_{D_{s,\eps}}\big((\p^2_{s,2})^{-1}\p^2_{s,1}\phi^{K_s}\big)
\end{equation*}
with a unique $\phi^{K_s}\in \mathfrak K_{s,y_0}$, that is,
\begin{equation*}
\ss_{D_{s,\eps}}\PP_s\phi^{K_s}=0\quad \text{ and }\quad \ss_{D_{s,\eps}}\phi^{K_s}=\phi^{K_s}.
\end{equation*}

For each $\sigma_s\in \sing_b(\scrP_{y_0})$, the operator $\P_{y_0}^*(\sigma)=\P_{y_0}(\overline\sigma)^*$ decomposes near $\overline \sigma_s$ in the same fashion as $\P_y(\sigma)$ in \eqref{FrakA}, namely
\begin{equation*}
\begin{bmatrix}
\q^1_{s,1} & \q^1_{s,2}\\
\q^2_{s,1} & \q^2_{s,2}
\end{bmatrix}
:
\begin{matrix}
R_s^\perp\\\oplus\\ R_s\cap \Dom^*
\end{matrix}
\to
\begin{matrix}
K_s\\\oplus\\ \dot K_s^\perp 
\end{matrix}
\end{equation*}
where $\dot K_s^\perp$ is the subspace of $H_1$ orthogonal to $K_s$ (so $K^\perp_s=\dot K_s^\perp\cap \Dom$). Let 
\begin{equation}\label{FiniteQ}
\QQ_s=\q^1_{s,1}- \q^1_{s,2}(\q^2_{s,2})^{-1}\q^2_{s,1}
\end{equation}
and note in passing that $\QQ_s(\overline \sigma)^*=\PP_s(\sigma)$. An arbitrary element of $\kerb_{y_0}^*$ decomposes as $\psi=\sum\psi_s$ where $\psi_s=\ss_{\overline D_{s,\eps}}\psi$ has pole only at $\overline \sigma_s$ and has the form
\begin{equation*}
\psi_s=\psi_{R_s^\perp}-\ss_{\overline D_{s,\eps}}(\q^2_{s,2})^{-1}\q^2_{s,1}\psi_{R^\perp_{s}},
\end{equation*}
again with a unique
\begin{equation*}
\psi_{R_s^\perp} \in \mathfrak K^*_{s,y_0} = \set{\ss_{\overline D_{s,\eps}}(\psi):\psi\in \Mero(\overline D_{s,\eps},R^\perp_s),\ \QQ_s(y_0,\cdot)\psi\in \Hol(\overline D_{s,\eps},K_s)}.
\end{equation*}

We will first show that with $\phi\in \kerb_{y_0}$ and $\psi\in \kerb^*_{y_0}$ decomposed as indicated,
\begin{equation}\label{Reduced}
[\phi,\psi]_y^\flat=\frac{1}{2\pi}\sum_s\oint_{\gamma_s} \big(\phi^{K_s}(\sigma),\QQ_s(\overline \sigma) \psi_{R^\perp_s}(\overline \sigma)\big)_{H_1}d\sigma
\end{equation}
where $\gamma_s=\partial D_{s,\eps}$ with counterclockwise orientation.

The integrand in \eqref{GreenMellinForm} at $y_0$ is meromorphic in $\Sigma$ with poles at the various $\sigma_s$, so equal, after trivializations, to
\begin{equation*}
\frac{1}{2\pi}\sum_{s}\oint_{\gamma_s} \big(\phi_s(\sigma),\P_{y_0}^\star (\overline \sigma) \psi(\overline \sigma)\big)_{H_1}d\sigma.
\end{equation*}
We have
\begin{multline}\label{UnClean}
\oint_{\gamma_s} (\phi_s(\sigma),\P_{y_0}^\star (\overline \sigma) \psi(\overline \sigma))_{H_1}d\sigma\\
=\oint_{\gamma_s} \big(\phi^{K_s}(\sigma)-(\p^2_{s,2})^{-1}\p^2_{s,1}\phi^{K_s}(\sigma),\P_{y_0}^\star (\overline \sigma) \psi(\overline \sigma)\big)_{H_1}d\sigma
\end{multline}
because the difference of the integrands in the left and the right hand sides is holomorphic near $D_{s,\eps}$. The right hand side of this identity is in turn equal to
\begin{multline*}
\oint_{\gamma_s} \big(\P_{y_0}(\sigma)\big(\phi^{K_s}(\sigma)-(\p^2_{s,2})^{-1}\p^2_{s,1}\phi^{K_s}(\sigma)\big),\psi(\overline \sigma)\big)_{H_2}d\sigma\\
=\oint_{\gamma_s} \big(\P_{y_0}(\sigma)\big(\phi^{K_s}(\sigma)-(\p^2_{s,2})^{-1}\p^2_{s,1}\phi^{K_s}(\sigma)\big),\psi_s(\overline \sigma)\big)_{H_2}d\sigma.
\end{multline*}
The argument justifying the equality in \eqref{UnClean} now gives that the last integral is equal to
\begin{equation*}
\oint_{\gamma_s} \big(\P_{y_0}(\sigma)\big(\phi^{K_s}(\sigma)-(\p^2_{s,2})^{-1}\p^2_{s,1}\phi^{K_s}(\sigma)\big),\psi_{R_{s}^\perp}(\overline \sigma)-(\q^2_{s,2})^{-1}\q^2_{s,1}\psi_{R^\perp_{s}}(\overline \sigma)\big)_{H_2}d\sigma.
\end{equation*}
Consequently,
\begin{multline*}
\oint_{\gamma_s} \big(\phi(\sigma),\P_{y_0}^\star (\overline \sigma) \psi(\overline \sigma)\big)_{H_1}d\sigma\\
=\oint_{\gamma_s} \big(\phi^{K_s}(\sigma)-(\p^2_{s,2})^{-1}\p^2_{s,1}\phi^{K_s}(\sigma),\P_{y_0}^\star (\overline \sigma) (\psi_{R_{s}^\perp}(\overline \sigma)-(\q^2_{s,2})^{-1}\q^2_{s,1}\psi_{R^\perp_{s}}(\overline \sigma))\big)_{H_1}d\sigma.
\end{multline*}
In view of \eqref{FiniteQ},
\begin{align*}
\P_{y_0}^\star(\psi_{R_{s}^\perp}-(\q^2_{s,2})^{-1}\q^2_{s,1}\psi_{R^\perp_{s}})
&=[\q^1_{s,1}\psi_{R_{s}^\perp}-\q^1_{s,2}(\q^2_{s,2})^{-1}\q^2_{s,1}\psi_{R^\perp_{s}}]\\
&\qquad +[\q^2_{s,1}\psi_{R_{s}^\perp}-\q^2_{s,2}(\q^2_{s,2})^{-1}\q^2_{s,1}\psi_{R^\perp_{s}}]\\
&=\QQ_s\psi_{R^\perp_{s}},
\end{align*}
hence
\begin{equation*}
\oint_{\gamma_s} \big(\phi(\sigma),\P_{y_0}^\star (\overline \sigma) \psi(\overline \sigma)\big)_{H_1}d\sigma
=\sum_s \oint_{\gamma_s} \big(\phi^{K_s}(\sigma),\QQ_s(\overline \sigma) \psi_{R^\perp_s}(\overline \sigma)\big)_{H_1}d\sigma
\end{equation*}
which gives \eqref{Reduced}. Note that
\begin{equation*}
\oint_{\gamma_s} \big(\phi(\sigma),\QQ_s(\overline \sigma) \psi(\overline \sigma)\big)_{H_1}d\sigma
=\oint_{\gamma_s} \big(\PP_s(\sigma) \phi^{K_s}(\sigma), \psi_{R^\perp_s}(\overline \sigma)\big)_{H_2}d\sigma.
\end{equation*}

We now show that for each $s$, the pairing
\begin{equation*}
\begin{gathered}
\mathfrak K_{s, y_0}\times\mathfrak K^*_{s, y_0}\to \C,\\
[\phi^{K_s},\psi_{R^\perp_s}]_{s,y_0}^\flat=\frac{1}{2\pi}\oint_{\gamma_s} \big(\phi^{K_s}(\sigma),\QQ_s (\overline \sigma) \psi_{R^\perp_s}(\overline \sigma)\big)_{H_1}d\sigma
\end{gathered}
\end{equation*}
is nondegenerate; this is Theorem 6.4 in \cite{GiMe01}. By Lemma 6.2 of that paper, there are $\psi^{j,0}_{R^\perp_s}(y_0,\sigma)$ meromorphic in $\C$ with pole only at $\overline \sigma_s$, of order $L_{s,j}$, such that
\begin{enumerate}
\item [(a${}'$)] $\psi^{j,\ell}_{R^\perp_s}(y_0,\sigma)=\ss_{\overline D_{s,\eps}}\big((\sigma-\overline\sigma_s)^\ell \psi^{j,0}_{R^\perp_s}(y_0,\sigma)\big)$, $j=1,\dotsc,J_s$, $\ell=0,\dotsc,L_{s,j}-1$ is a basis of $\mathfrak K^*_{s,y_0}$,
\item [(b${}'$)] the values of the $(\sigma-\overline \sigma_s)^{L_{s,j}}\psi^{j,0}_{R^\perp_s}(y_0,\sigma)$ at $\overline \sigma_s$ from a basis of $R^\perp_s$,
\item [(c${}'$)] the values of
\begin{equation}\label{BasisOfK}
\alpha^{s,j}(\sigma)=\QQ_s(y_0,\sigma)\psi^{j,0}_{R^\perp_s}(y_0,\sigma)
\end{equation}
at $\overline \sigma_s$ form a basis of $K_s$, and  
\item [(d${}'$)] each of the functions
\begin{equation*}
\sigma\mapsto \big((\sigma-\sigma_s)^{L_{s,j'}} \phi^{K_s}_{j,0}(y_0,\sigma),\QQ_s(y_0,\sigma)\psi^{j',0}_{R^\perp_s}(\overline \sigma)\big)_{H_2},\quad j,j'=1,\dotsc,J_s
\end{equation*}
is holomorphic in a neighborhood of the closure of $D_{s,\eps}$ with value $\delta_{jj'}$ at $\sigma=\sigma_s$.
\end{enumerate}
Properties (a${}'$)--(c${}'$) are the analogues for $\QQ_s$ of the properties of the $\phi^{K_s}_{j,\ell}$. The fourth property links the two bases in a convenient manner.

Suppose 
\begin{equation*}
\phi^{K_s}=\sum f^{j,\ell}\phi^{K_s}_{j,\ell}, \quad \psi_{R^\perp_s}=\sum g_{j,\ell}\psi^{j,\ell}_{R^\perp_s}.
\end{equation*}
Then 
\begin{equation*}
[\phi^{K_s},\psi_{R^\perp_s}]_{s,y_0}^\flat =\frac{1}{2\pi}\sum_{j,\ell,j',\ell'} f^{j,\ell}\overline g_{j'\ell'}\oint_{\gamma_s}\big(\phi^{K_s}_{j,\ell}(\sigma),\QQ_s (\overline \sigma) \psi^{j',\ell'}_{R^\perp_s}(\overline \sigma)\big)_{H_1}d\sigma
\end{equation*}
The difference
\begin{equation*}
\big(\phi^{K_s}_{j,\ell}(\sigma),\QQ_s (\overline \sigma)\psi^{j',\ell'}_{R^\perp_s}(\overline \sigma)\big)_{H_1} - \big((\sigma-\sigma_0)^\ell\phi^{K_s}_{j,0},\QQ_s (\overline \sigma)((\overline \sigma-\overline \sigma_0)^{\ell'}\psi^{j',0}_{R^\perp_s}(\overline \sigma))\big)_{H_1}
\end{equation*}
is holomorphic in a neighborhood of $D_{s,\eps}$, so
\begin{align*}
2&\pi[\phi^{K_s},\psi_{R^\perp_s}]_{s,y_0}^\flat \\
&=\sum_{j,\ell,j',\ell'} f^{j,\ell}\overline g_{j'\ell'}\oint_{\gamma_s}(\sigma-\sigma_0)^{\ell+\ell'} \big(\phi^{K_s}_{j,0}(\sigma),\QQ_s (\overline \sigma)(\psi^{j',0}_{R^\perp_s}(\overline \sigma))\big)_{H_1}d\sigma\\
&=\sum_{j,\ell,j',\ell'} f^{j,\ell}\overline g_{j'\ell'}\oint_{\gamma_s}(\sigma-\sigma_0)^{\ell+\ell'-L_{s,j}} \big((\sigma-\sigma_0)^{L_{s,j}}\phi^{K_s}_{j,0}(\sigma),\QQ_s (\overline \sigma)(\psi^{j',0}_{R^\perp_s}(\overline \sigma))\big)_{H_1}d\sigma
\end{align*}
By virtue of (d${}'$) the last integral vanishes unless $j=j'$ and $\ell+\ell'=L_j-1$ in which case the value is $2\pi\im$. Thus  we arrive at
\begin{equation*}
[\phi^{K_s},\psi_{R^\perp_s}]_{s,y_0}^\flat = 
\im \sum_{j=1}^{J_s}\sum_{\ell=0}^{L_j-1} f^{j,\ell}\overline g_{j,L_j-\ell-1}.
\end{equation*}
It follows immediately that the condition $[\phi^{K_s},\psi_{R^\perp_s}]_{s,y_0}^\flat=0$ for all $\psi^{R^\perp_s}\in \mathfrak K_{s,y_0}$ implies $\phi^{K_s}=0$. 
This completes the proof of Theorem \ref{NonDegPairing}.
\end{proof}

\section{The trace bundle of a wedge operator}\label{sec-TraceWedge}

Let $\M$ be a manifold whose boundary $\N=\partial M$ fibers over a connected manifold $\Y$ with typical fiber $\Z$, assumed to be compact:
\begin{equation}\label{BdyFiberBundle}
\display{120pt}{\begin{center}
\begin{picture}(120,50)
\put(35,44){$\Z\embed\N$	
\put(-9,-20){$\bigg\downarrow$}
\put(-1,-18){\small $\wp$}
\put(-10,-40){$\Y$}
}
\end{picture}
\end{center}}
\end{equation}
Let $E$, $F\to \M$ be vector bundles. Recall that a wedge differential operator on $\M$ is an operator $A\in x^{-m}\Diff^m_e(\M;E,F)$ where $\Diff^m_e(\M;E,F)$ is the class of edge-differential operators with boundary structure associated to \eqref{BdyFiberBundle} defined by Mazzeo \cite{Maz91}; here and throughout the rest of this section, $x$ denotes a defining function of $\N$ which is positive in $\open\M$. The notion of $w$-ellipticity of $A$, invariantly defined in \cite[Section 2]{GKM4}, is equivalent to ellipticity of $x^mA$ in the sense of Mazzeo (op.~cit., p.~1620). Examples of these operators include regular elliptic operators on a manifold with boundary (where $\Z$ reduces to a point) and elliptic cone operators, where $\Y$ is discrete. We will assume in this section that $A$ is $w$-elliptic.

Let $\pi:\N^\wedge\to\N$ be the closure of the inward pointing normal bundle of $\N$, an $\overline\R_+$ bundle, $E^\wedge=\pi^*E$, likewise $F^\wedge$, let $\Z_y=\wp^{-1}(y)$, $\Z^\wedge_y=\pi^{-1}\Z_y$, $E_{\Z_y}=E|_{\Z_y}$, similarly $E^\wedge_{\Z^\wedge_y}$. 

The normal family of $A\in x^{-m}\Diff^m_e(\M;E,F)$, see \cite[Definition 2.13]{GKM4}, is an invariantly defined family of cone operators 
\begin{equation*}
T^*\Y\ni \pmb\eta\mapsto A_\wedge(\pmb \eta)\in x^{-m}\Diff^m_b(\Z^\wedge_y;E^\wedge_{\Z_y},F^\wedge_{\Z_y}).
\end{equation*}
This family along the zero section of $T^*\Y$ gives a family $y\mapsto \bA_y$ of cone-differential operators.

Fix $\gamma\in \R$ and let $\trb_y$ denote the set of sections of $E^\wedge_{\Z_y}$ which are finite sums of the form
\begin{equation}\label{ProtoTrace}
u=\sum_{\substack{\sigma\in \spec_b(\bP_y)\\ \gamma-m<\Im\sigma<\gamma}}\sum_{\ell=0}^{L_\sigma} a_{\sigma,\ell}x_\wedge^{\im \sigma}\log^\ell x_\wedge
\end{equation}
that satisfy $\bA_y u=0$. Here $\bP_y=x^{m}\bA_y$, an elliptic $b$-operator for each $y\in \Y$. The function $x_\wedge:\N^\wedge\to \R$ is smooth and linear in the fibers, positive off the zero section (take $x_\wedge=dx$ for instance) and the $a_{\sigma,\ell}$ are smooth sections of $E$ along $\Z_y=\wp^{-1}(y)$. Let
\begin{equation*}
\trb=\bigsqcup_{y\in\Y}\trb_y
\end{equation*}
and let $\pi_\Y:\trb\to\Y$ be the canonical map. 

Define
\begin{equation*}
\spec_e(A)=\set{(y,\sigma)\in \Y\times \C:\sigma\in \spec_b(\bP_y)}.
\end{equation*}

\begin{theorem}\label{TheTraceBundle}
Suppose that $A\in x^{-m}\Diff^m_e(\M;E,F)$ is $w$-elliptic and 
\begin{equation}\label{b-specGap}
\spec_e(A)\cap \big(\Y\times\set{\sigma\in \C:\Im\sigma= \gamma, \gamma-m}\big)=\emptyset.
\end{equation}
Then $\pi_\Y:\trb\to\Y$ is a smooth vector bundle over $\Y$, the trace vector bundle of $A$. The $C^\infty$ sections of $\trb$ are the sections pointwise of the form \eqref{ProtoTrace} which are smooth as sections of $E^\wedge$ over $\open \N^\wedge$. 
\end{theorem}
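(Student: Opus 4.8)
The plan is to deduce Theorem~\ref{TheTraceBundle} from Theorem~\ref{TheKernelBundle} by identifying the trace bundle $\trb$ with a meromorphic kernel bundle $\kerb$ of an appropriate holomorphic Fredholm family, after a Mellin transform. First I would set up the family: for each $y\in\Y$, the $b$-operator $\bP_y=x^m\bA_y$ has a well-defined indicial family $\widehat{\bP}_y(\sigma)\colon H^{s}(\Z_y;E_{\Z_y})\to H^{s-m}(\Z_y;F_{\Z_y})$, an elliptic family of differential operators on the closed manifold $\Z_y$ depending holomorphically on $\sigma\in\C$ and, by $w$-ellipticity, invertible for $\Im\sigma$ outside a locally finite union of horizontal lines. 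These assemble (over the bundle $\N\to\Y$ with fiber $\Z$) into a smooth bundle homomorphism $\scrP\colon\pi^*\scrD\to\pi^*\Ha_2$ of the type considered in Section~\ref{sec-SetUp}, with $\Ha_1$, $\Ha_2$ the bundles of Sobolev sections over the fibers $\Z_y$, $\scrD$ the order-$m$ Sobolev bundle, and $\Sigma$ the open strip $\set{\gamma-m<\Im\sigma<\gamma}$ (which by \eqref{b-specGap} contains $\spec_b(\bP_y)$ away from its boundary, so condition~\eqref{FiniteSpecb} holds since $\spec_b(\bP_y)$ is finite in any horizontal strip and $\spec_e(A)$ is closed). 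Ellipticity also gives, for each $y$, some $\sigma$ in the strip with $\widehat{\bP}_y(\sigma)$ invertible, as required by \eqref{PointwiseOps}; and the adjoint hypothesis on $\scrD^*$ is automatic because the $\widehat{\bP}_y(\sigma)$ are differential operators on a closed manifold, with $\scrD^*=H^{-s+m}$.

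Next I would make precise the correspondence between the ``physical'' description \eqref{ProtoTrace} and the ``Mellin'' description of $\kerb_y$. A section of the form $u=\sum_{\sigma,\ell}a_{\sigma,\ell}x_\wedge^{\im\sigma}\log^\ell x_\wedge$ with $\bA_y u=0$ corresponds, under the Mellin transform in $x_\wedge$, exactly to a $\scrD_y$-valued meromorphic function $\phi(\sigma)$ on $\C$ whose poles lie in $\spec_b(\bP_y)\cap\Sigma$, whose principal parts encode the coefficients $a_{\sigma,\ell}$, and for which $\widehat{\bP}_y(\sigma)\phi(\sigma)$ is holomorphic on $\Sigma$ (the condition $\bA_y u=0$ translating into $\bP_yu=0$ and hence into $\widehat{\bP}_y\phi$ having no poles). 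This is precisely the definition of $\kerb_y=\set{\ss_\Sigma(\phi):\widehat{\bP}_y\phi\in\Hol(\Sigma,\Ha_{2,y})}$. I would cite the standard Mellin-transform description of the formal solution space of an elliptic $b$-operator (Kondrat$'$ev, Melrose \cite{Mel93}, Lesch \cite{Lesch1997}) for the finite-dimensionality and for the exact dictionary between singular parts at $\sigma_0\in\spec_b(\bP_y)$ and terms $x_\wedge^{\im\sigma_0}\log^\ell x_\wedge$ in \eqref{ProtoTrace}; this gives a canonical, fiberwise linear bijection $\trb_y\cong\kerb_y$, hence an identification of $\trb$ with $\kerb$ as sets over $\Y$.

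With this in hand, Theorem~\ref{TheKernelBundle} endows $\kerb\to\Y$ with a smooth vector bundle structure whose smooth sections are exactly $\BB^\infty(\Y;\kerb)$, i.e.\ the right inverses of $\pi$ that, read through the trivializations of $\Ha_1$, are smooth off $\sing_e(\scrP)=\spec_e(A)\cap(\Y\times\Sigma)$ and holomorphic in $\sigma$. Transporting this structure to $\trb$ via the bijection above, it remains only to check that this notion of smooth section matches the one in the statement, namely sections pointwise of the form \eqref{ProtoTrace} that are smooth as sections of $E^\wedge$ over $\open\N^\wedge$. This is a matter of unwinding the Mellin transform: a family $y\mapsto u_y$ of the form \eqref{ProtoTrace} is smooth in $(y,x_\wedge,z)$ on $\open\N^\wedge$ if and only if the associated Mellin-dual family $y\mapsto\phi_y(\sigma)$ is smooth in $y$ with values in the $\Dom$-valued holomorphic-off-$\spec_e(A)$ functions, because smoothness of $u$ in the $x_\wedge$-variable up to the exponents and logarithmic powers is equivalent, under Mellin inversion along a vertical contour, to holomorphy of $\phi$ in $\sigma$ together with smooth dependence of its principal parts on $y$; the compactness of $\Z$ handles the $z$-direction, and the contour can be moved across $\sing_e(A)$ locally in $y$ precisely because $\sing_e(A)$ is closed. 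I expect the main obstacle to be exactly this last bookkeeping step: pinning down, uniformly in $y$ near a point where the $b$-spectrum branches, that ``smooth as a section of $E^\wedge$ over $\open\N^\wedge$'' is the faithful image of ``$\BB^\infty$'' — in particular that no regularity is lost or spuriously gained when the number or multiplicity of exponents $\sigma$ in \eqref{ProtoTrace} jumps. The rest is a direct application of the already-established Theorem~\ref{TheKernelBundle}.
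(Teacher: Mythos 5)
Your proposal is correct and follows essentially the same route as the paper: identify the indicial family $\scrP_y$ of $\bP_y$ with a family of the type covered by Theorem~\ref{TheKernelBundle}, acting on Sobolev sections over the fibers $\Z_y$ with $\Sigma$ the horizontal strip $\gamma-m<\Im\sigma<\gamma$, verify \eqref{FiniteSpecb} from \eqref{b-specGap} and $w$-ellipticity, and transport the smooth structure to $\trb$ via the Mellin/inverse-Mellin correspondence between singular parts in $\kerb_y$ and the expansions \eqref{ProtoTrace}. The paper makes this correspondence concrete through the contour-integral operator $S_y(\phi)=\tfrac{1}{2\pi}\oint_\Gamma x_\wedge^{\im\sigma}\phi(\sigma)\,d\sigma$ and its inverse given by the singular part of the Mellin transform of a cut-off of $u$, which is the same mechanism you invoke in slightly less explicit form.
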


This is an application of Theorem~\ref{TheKernelBundle}. Let $\scrP_y$ be the indicial family of $\bP_y$, namely $\scrP_y(\sigma)$ is the restriction of $x^{-\im\sigma}P_y x^{\im \sigma}$ to the space of distributional sections of $E$ along $\Z_y$. Let $\Ha_{1,y}=L^2(\Z_y;E_{\Z_y})$, $\Ha_{2,y}=L^2(\Z_y;F_{\Z_y})$, and $\scrD_y$ is the Sobolev space $H^m(\Z_y;E_{\Z_y})$. These are the fibers of Hilbert space bundles $\Ha_1$, $\Ha_2\to\Y$, see \cite[Section 3]{GKM4} and a set-theoretical subbundle $\scrD\subset \Ha_1$. Because the trivializations of $\Ha_1$ and $\Ha_2$ are constructed using diffeomorphisms trivializing $\N\to\Y$, $\scrD$ is a bundle with fiber $H^m(\Z,E_\Z)$ where $E_\Z$ is bundle-isomorphic to the restriction of $E$ to some fiber of $\wp:\N\to\Y$. The properties required in Section~\ref{sec-SetUp} hold here for $\scrP$ on $\Y\times\Sigma$, where
\begin{equation*}
\Sigma=\set{\sigma\in \C:\gamma-m<\Im\sigma<\gamma}. 
\end{equation*}
Since $\sing_b(\scrP_y)=\spec_b(\bP_y)\cap\Sigma$, 
\begin{equation*}
\spec_e(A)\cap\big(\Y\times\Sigma\big)=\sing_e(\scrP).
\end{equation*}
The hypothesis \eqref{b-specGap} and the ellipticity of $A$ imply that \eqref{FiniteSpecb} holds, so we have a well defined smooth vector bundle associated with $\scrP$. 

We now define a bundle isomorphism $\kerb\to\trb$. Given $y\in \Y$ and $\phi\in \kerb_y$ let 
\begin{equation*}
S_y(\phi) = \frac{1}{2\pi} \oint_\Gamma x_\wedge^{\im \sigma}\phi(\sigma)\, d\sigma. 
\end{equation*}
Here $\Gamma$ is a simple closed smooth curve enclosing $\sing_b(\P_y)\cap \Sigma$ and the integral is computed with the counterclockwise orientation. Then $S_y(\phi)$ has the form \eqref{ProtoTrace} and since
\begin{equation*}
\bP_y S_y(\phi)=\frac{1}{2\pi} \oint_\Gamma x^{\im \sigma}\scrP_y(\sigma) \phi(\sigma)\, d\sigma
\end{equation*}
vanishes because the integrand is entire, $S_y(\phi)\in \trb_y$. If $u\in \trb_y$ and $\omega$ is a cut-off function equal to $1$ near $x_\wedge=0$, then the singular part of the Mellin transform of $\omega u$,
\begin{equation*}
\ss_\C\int_0^\infty x_\wedge^{-\im \sigma}\omega(x_\wedge) u(x_\wedge) \,\frac{d x_\wedge}{x_\wedge},
\end{equation*}
is an element of $\kerb_y$ such that $S_y(\phi)=u$. So $S$ is a fiberwise isomorphism which we may use to give $\trb$ the structure of a $C^\infty$ vector bundle. It is easy to see that $S$ maps $\BB^\infty(\Y;\kerb)$ into elements pointwise of the form \eqref{ProtoTrace} that are smooth as sections of $E^\wedge$ over $\open \N^\wedge$, so the smooth structure of $\trb$ induced by that of $\kerb$ is the same as one would obtain from declaring as smooth the aforementioned sections.

\section{Example}\label{sec-Example}

The following example is a toy model of the situation that arises in the analysis of a boundary value problem for a symmetric second order elliptic operator near the boundary, $\Y$, (codimension $2$) of a codimension $1$ smooth submanifold $\Crack$ of the ambient manifold, with Dirichlet condition along $\Crack$.

Let $\Y$ be a compact connected manifold and let $E\to\Y$ be a Hermitian complex vector bundle. Further, let $a\in C^\infty(\Y;\End(E))$ be a self-adjoint section. Let $\wp:\Ha\to\Y$ be the Hilbert space bundle whose fiber over $y\in \Y$ is the space of $E_y$-valued $L^2$ functions on $[0,\pi]$:
\begin{equation*}
\Ha_y=L^2([0,\pi];E_y),
\end{equation*}
defined using the Lebesgue measure on $[0,\pi]$ and the Hermitian product on $E_y$. The unitary trivializations of $\Ha\to\Y$ are obtained from those of $E\to\Y$. Let 
\begin{equation*}
\scrD_y=\set{\phi\in L^2([0,\pi];E_y):\phi \in H^2([0,\pi];E_y),\ \phi(0)=\phi(\pi)=0}.
\end{equation*}
Here $H^2([0,\pi];E_y)$ is the Sobolev space of order $2$. Define
\begin{equation*}
\qquad \scrD=\bigsqcup_{y\in \Y} \scrD_y,
\end{equation*}
a set-theoretical subbundle of $\Ha$. 

Define
\begin{equation*}
\scrP(\sigma):\scrD\subset\Ha\to\Ha
\end{equation*}
as
\begin{equation*}
\scrP_y(\sigma)=D_s^2+a+\sigma^2.
\end{equation*}
Thus $\scrP_y(\sigma)$ is closed, densely defined and Fredholm of index zero. The graph norms of the $\scrP_y(\sigma)$ are all equivalent to each other. Further, the adjoint family $\scrP^*$ has the same properties. So the results of the Sections~\ref{sec-Frames} and \ref{sec-TransitionFunctions} are applicable relative to open sets $\Sigma\subset \C$ satisfying \eqref{FiniteSpecb}.

To see that such open sets exist we describe $\sing_b(\scrP_y)$ explicitly in terms of the eigenvalues of $a(y)$. Decomposing $E_y$ as a direct sum of eigenspaces of $a(y)$ the problem becomes a family of scalar problems 
\begin{equation*}
(D_s^2+\mu_j+\sigma^2)\phi=0,\quad j=1,\dotsc,J_y
\end{equation*}
where the $\mu_j$ are the eigenvalues of $a(y)$. This equation has a nonzero solution with $\phi(0)=\phi(\pi)=0$ if and only if $-\mu_j-\sigma^2=k^2$ with $k\in \NN$. Thus
\begin{equation*}
\sing_b(\scrP_y)=\set{\pm\im \sqrt{\mu+k^2}:\mu\in\spec(a(y)),\ k\in \NN}.
\end{equation*}

Since $\Y$ is compact, the norm of $a$ is bounded, say $\tr(a^*a)<r^2$. Then the eigenvalues of $a(y)$ lie in the interval $(-r,r)$ and consequently, the numbers $\mu+k^2$, $\mu\in \spec(a(y))$, lie in the interval $I_k=(k^2-r,k^2+r)$. This interval and the interval $I_{k+1}$ are disjoint if and only if $k>(2r-1)/2$. This condition on $k\in \NN$ ensures that no point of $\sing_e(\scrP)$ belongs to the set $\Y\times \partial\Sigma$ where
\begin{equation*}
\Sigma=\set{\sigma\in \C:|\Im\sigma|<\sqrt{k^2+k+1/2}}
\end{equation*}
since $k^2+k+1/2$ is the midpoint between $k^2+r$ and $(k+1)^2-r$.


\end{document}